\documentclass[reqno]{amsart}
\usepackage{amsmath}
\usepackage{amsthm, amscd, amssymb, amsfonts, amsbsy}
\usepackage[usenames, dvipsnames]{color}
\usepackage{enumerate}
\usepackage{esint}
\usepackage{mathrsfs}
\usepackage{todonotes}
\usepackage[colorlinks=true, linkcolor=blue, citecolor=blue]{hyperref}
\usepackage{verbatim}
\usepackage{kotex}
\usepackage{cite}
\usepackage{float}

\usepackage{tabularx}
\usepackage{booktabs}
\usepackage{ragged2e}

\numberwithin{equation}{section}

\theoremstyle{plain}
\newtheorem{theorem}{Theorem}[section]

\newtheorem{proposition}[theorem]{Proposition}

\theoremstyle{definition}

\newtheorem{remark}[theorem]{Remark}

\def\bR{\mathbb{R}}
\def\bH{\mathbb{H}}

\def\cH{\mathcal{H}}
\def\cQ{\mathcal{Q}}

\def\cK{\mathcal{K}}

\begin{document}
\title[]{Failure of zero extension in parabolic Sobolev spaces}

\author[J. Choi]{Jongkeun Choi}
\address[J. Choi]{Department of Mathematics Education, Pusan National University,  Busan, 46241, Republic of Korea}
\email{\href{mailto:jongkeun_choi@pusan.ac.kr}{\nolinkurl{jongkeun_choi@pusan.ac.kr}}}
\thanks{J. Choi was supported by the National Research Foundation of Korea (NRF) grant funded by the Korea government (MSIT) (RS-2026-25477708).}

\author[D. Kim]{Doyoon Kim}
\address[D. Kim]{Department of Mathematics, Korea University, 145 Anam-ro, Seongbuk-gu, Seoul, 02841, Republic of Korea}
\email{\href{mailto:doyoon_kim@korea.ac.kr}{\nolinkurl{doyoon_kim@korea.ac.kr}}}
\thanks{D. Kim was supported by the National Research Foundation of Korea (NRF) grant funded by the Korea government (MSIT) (RS-2025-16065192) and by a Korea University Grant.}

\author[K. Woo]{Kwan Woo}
\address[K. Woo]{Departement Mathematik und Informatik, Universit\"at Basel, CH-4051 Basel, Switzerland}
\email{\href{mailto:kwan.woo@unibas.ch}{\nolinkurl{kwan.woo@unibas.ch}}}
\thanks{K. Woo was supported by SNF Project 212573 FLUTURA – Fluids, Turbulence, Advection.}

\subjclass[2020]{46E35, 35D30, 35B65}
\keywords{Zero extension, Sobolev space, Parabolic equation}

\begin{abstract}
We show that spatial zero extension across the boundary may fail in parabolic Sobolev spaces $\mathring{\cH}^1_p((0,T) \times \Omega)$, which can also be characterized as
$$
L_p(0,T;\mathring{W}^1_p(\Omega))\cap W^1_p(0,T; W^{-1}_{p}(\Omega)).
$$
More precisely, for any $p\in [1, \infty)$, we construct a function $u\in \mathring{\cH}^1_p((0,T)\times \bR^d_+)$ whose zero extension does not belong to $\cH^1_p((0,T)\times \bR^d)$.
The obstruction occurs even for a flat boundary and is caused by a self-similar boundary layer concentrated at the initial-boundary corner, which produces a boundary supported normal flux defect after zero extension.
We also discuss the suitability of various Sobolev-type spaces as solution spaces for parabolic equations in divergence form.
\end{abstract}

\maketitle

%========================================
\section{Introduction and main results}	\label{S1}
%========================================

It is well known that the zero extension of a function $u \in \mathring{W}_p^1(\Omega)$, where $\Omega \subset \bR^d$ is an open set and $\mathring{W}_p^1(\Omega)$ denotes the closure of $C_0^\infty(\Omega)$ in $W_p^1(\Omega)$, belongs to $W_p^1(\bR^d)$.
This property arises naturally in various fields of analysis and is highly useful in the theory of PDEs.
Specifically, it allows one to reduce problems posed on the domain $\Omega$ to corresponding problems on the whole space, thereby enabling the use of global tools---for example, the Sobolev--Gagliardo--Nirenberg embedding theorem and Fourier analysis techniques on $\bR^d$.
Such reductions are widely used in the study of Dirichlet problems for divergence-form elliptic equations, particularly in establishing solvability and regularity results.
A natural question then arises: 

\medskip

\begin{equation}
\tag{Q}
\label{main_question}
\begin{minipage}{0.85\textwidth}
\itshape
Does the extension property hold for functions $u(t,x)$ arising in parabolic equations and having zero trace on the spatial boundary?
\end{minipage}
\end{equation}

\medskip

In this paper, we study this problem in standard parabolic Sobolev spaces for divergence form equations.
More precisely, we investigate whether functions vanishing on the lateral boundary remain in the same Sobolev class after extension by zero to the whole space.
From this perspective, we also discuss the \emph{suitability} of various Sobolev-type spaces for the solvability of parabolic equations.

The parabolic spaces mainly considered in this paper are 
$$
\cH^1_p((0,T)\times \Omega),  \quad \mathring{\cH}^1_p((0,T)\times \Omega),
$$
where $p\in [1, \infty)$, $T\in (0, \infty]$, and $\Omega$ is a domain in $\bR^d$.
The space $\cH^1_p$ consists of functions $u$ with $u, Du\in L_p$, while the time derivative admits the distributional form
\begin{equation}		\label{260226_eq1}
u_t=D_i g_i+f
\end{equation}
for some $g_i, \, f\in L_p$.
The space $\mathring{\cH}^1_p$ corresponds to the zero-lateral-boundary subspace of $\cH^1_p$.
These spaces can be characterized in terms of Bochner--Sobolev spaces.
For instance, for $p\in (1, \infty)$, the following equivalence holds: 
$$
\mathring{\cH}^1_p((0,T)\times \Omega) \sim L_p(0,T; \mathring{W}^1_p(\Omega))\cap W^1_p(0,T; W^{-1}_p(\Omega)).
$$
See \S \ref{S1_1} for precise definitions of $\cH^1_p$ and $\mathring{\cH}^1_p$, as well as a brief discussion of this equivalence.

These spaces provide a natural functional framework for second-order parabolic equations in divergence form, such as linear equations of the form
\[
u_t-D_i(a^{ij}D_j u+a^iu)+b^iD_iu+cu=D_i g_i+f.
\]
Indeed, when all coefficients are bounded, the time derivative $u_t$ can be interpreted in the sense of \eqref{260226_eq1}.
In particular, $\mathring{\cH}^1_p$ is well suited for Dirichlet problems with zero lateral boundary conditions, since the boundary condition is built into the definition of the space.
Motivated by advances in parabolic theory, this Sobolev framework has been widely used in the study of $L_p$-estimates and solvability for divergence-form equations; see, for instance, \cite{MR2541414, MR2187159, MR2329320, MR2832162, MR2835999, MR2304157} for results on both linear and nonlinear equations, and \cite{MR3947859, MR2650802, MR3812104, arXiv:2308.09220, MR4704640, MR2352490} for further results in more general functional settings involving mixed and/or weighted norms.
Some of these works address higher-order, kinetic, and nonstationary Stokes equations.
We also refer to \cite{MR4345837, MR4920684} for related results in the spaces with fractional time derivatives.
For further discussions on Bochner--Sobolev type function spaces such as
\[
L_p(0,T; X_1)\cap W^1_p(0,T; X_0),
\]
see, for instance, \cite{MR1345385, MR4249416, MR1230384, MR2597943, MR0350177, MR3012216, MR3524106, MR1846644} and the references therein.

\bigskip

The main purpose of this paper is to show that, in general, the answer to \eqref{main_question} is negative.
More precisely, we prove that the zero extension may fail in $\mathring{\cH}^1_p$ for every $p\in [1, \infty)$, even when the underlying domain is the half-space
$$
\bR^d_+=\{x=(x_1, x')\in \bR^d: x_1>0, \, x'\in \bR^{d-1}\},
$$
which has a \emph{flat} boundary.
The main result is the following.

\begin{theorem}		\label{MT}
Let $T\in (0, \infty]$.
For any $p\in [1, \infty)$,  there exists a function $u\in \mathring{\cH}^1_{p}((0,T)\times \bR^d_+)$ such that 
$$
\bar{u}\not\in \cH^1_{p}((0,T)\times \bR^d),
$$
where $\bar{u}$ is the zero extension of $u$ with respect to the spatial variable $x$.
More precisely, although $\bar u, D\bar u\in L_p((0,T)\times\bR^d)$, there do not exist
\[
G_i, \,F\in L_p((0,T)\times\bR^d), \quad i=1,\ldots,d
\]
such that
\[
\bar u_t=D_iG_i+F
\quad \text{ in }\,\, (0,T)\times\bR^d.
\]
\end{theorem}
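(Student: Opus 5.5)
The plan is to build $u$ as a self-similar boundary layer at the corner $\{t=0,\,x_1=0\}$ whose $x_1$-mass changes in time so fast that, after zero extension, the resulting normal flux concentrated on $\{x_1=0\}$ cannot be absorbed by any $L_p$ divergence. The construction is explicit: take
\[
u(t,x)=\chi(t,x')\,t^{\gamma}\,\Phi\!\left(x_1/\sqrt t\right)\zeta(x_1),\qquad x_1>0,
\]
or a lacunary sum $\sum_k c_k u_k$ of such profiles localized at dyadic scales $r_k\to0$, if a single scale turns out to be borderline. Here $\Phi\in C_0^\infty([0,\infty))$ with $\Phi(0)=0$ and $\int_0^\infty\Phi>0$, and $\chi,\zeta$ are smooth cutoffs. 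Near the corner, $\zeta\equiv 1$, so for each fixed $t$ the function $u(t,\cdot)$ is smooth, compactly supported in $\overline{\bR^d_+}$ and vanishes on $\{x_1=0\}$. I will verify $u,Du\in L_p$ directly. The main contribution is
\[
\|D_1u\|_{L_p}^p\sim\int_0^1 t^{p(\gamma-1/2)}\,t^{1/2}\,dt,
\]
which is finite when $p(\gamma-\tfrac12)+\tfrac32>0$. The $x'$-derivatives and the cutoff terms are harmless. Membership in $\mathring{\cH}^1_p$ should then follow from approximating $u$ by its time-truncations or translations, using the definitions in \S\ref{S1_1}.

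Next I produce the divergence representation in the half-space. Set
\[
g_1(t,x)=-\int_{x_1}^\infty u_t(t,s,x')\,ds,
\]
and let $f$ collect the cutoff and $x'$-terms, so that $u_t=D_1g_1+f$ in $(0,T)\times\bR^d_+$ with $g_1,f\in L_p$. The self-similar scaling gives $|g_1|\lesssim t^{\gamma-1/2}$ on a layer of width $\sqrt t$, and the same exponent condition as above makes this $L_p$. After zero extension, testing against $\psi\in C_0^\infty((0,T)\times\bR^d)$ gives
\[
\bar u_t=D_1\bar g_1+\bar f+h(t,x')\,\delta_{\{x_1=0\}},\qquad h(t,x')=-g_1(t,0,x')=\chi\,\frac{d}{dt}\Bigl(\int_0^\infty u\,dx_1\Bigr)\sim t^{\gamma-1/2}.
\]
So the defect is exactly the time derivative of the boundary-layer mass $M(t)\sim t^{\gamma+1/2}$.

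The core step is showing that no $G_i,F\in L_p$ satisfy $\bar u_t=D_iG_i+F$. I will argue by duality. Such a representation would give
\[
\Bigl|\int \bar u\,\psi_t\Bigr|\le C\bigl(\|\psi\|_{L_{p'}}+\|D\psi\|_{L_{p'}}\bigr)\quad\text{for all } \psi\in C_0^\infty((0,T)\times\bR^d),
\]
with the usual modification via $L_\infty$ norms when $p=1$. The test functions will be parabolically scaled bumps $\psi_r(t,x)=\theta(t/r^2)\,\eta(x/r)$ that do \emph{not} vanish on $\{x_1=0\}$; these are exactly the tests invisible to the half-space bound. Using the computation above, $\int\bar u\,\partial_t\psi_r$ equals $-\int h\,\psi_r|_{x_1=0}$ plus terms controlled by $\|\bar g_1\|_{L_p},\|\bar f\|_{L_p}$ restricted to the support of $\psi_r$. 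The defect term should scale like $r^{2\gamma-1}\cdot r^{d+1}$, while the right side scales like $r^{(d+2)/p'-1}$ and the local $L_p$ errors tend to zero relative to it. Alternatively, I will sum over $r=r_k$ with suitable signs and weights to force a divergence.

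The main obstacle is choosing $\gamma$ (or the weights $c_k$) so that both requirements hold at once. Integrability of $Du,g_1$ requires $\gamma>\tfrac12-\tfrac{3}{2p}$, while failure requires that the defect pairing beat $\|\psi_r\|_{W^{1}_{p'}}$; this is the comparison of $2\gamma+d$ against $(d+2)/p'-1$. If a single power lands exactly on the borderline, which I suspect happens, I will first try a lacunary superposition of profiles with logarithmic weights. Then $u$ stays in $L_p$ by near-orthogonality of the scales, while the pairings against a matched sum of $\psi_{r_k}$ diverge like $\sum_k 1$. The case $p=1$, where $L_{p'}=L_\infty$, will be handled with the same test functions.
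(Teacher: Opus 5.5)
Your construction of $u$ and your identification of the boundary defect $h\,\delta_{\{x_1=0\}}$, with $h=\frac{d}{dt}\int_0^\infty u\,dx_1$, are essentially the paper's. The paper takes $u=\partial_x(t^{-a}e^{-x^2/t})$, i.e.\ your $\gamma=-a-\tfrac12$, with defect $-g(t,0)=at^{-a-1}$.

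\textbf{The gap is in the core step.} Boundary-centred parabolic bumps $\psi_r(t,x)=\theta(t/r^2)\eta(x/r)$ can never produce a contradiction, for any $u\in L_p(0,T;\mathring W^1_p(\bR^d_+))$. So no choice of $\gamma$, and no lacunary superposition with logarithmic weights, can rescue it. On $S_r=\operatorname{supp}\psi_r$ you have $0<x_1\lesssim r$ and $u=0$ at $x_1=0$. The one-dimensional Poincar\'e inequality then gives $\|u\|_{L_p(S_r)}\lesssim r\|D_1u\|_{L_p(S_r)}$, while $\|\partial_t\psi_r\|_{L_{p'}}\sim r^{-1}\|D\psi_r\|_{L_{p'}}$. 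Hence
\[
\Bigl|\int\bar u\,\partial_t\psi_r\,dx\,dt\Bigr|\lesssim \|D_1u\|_{L_p(S_r)}\,\|D\psi_r\|_{L_{p'}} .
\]
For $\Psi=\sum_k\epsilon_k\psi_{r_k}$ with time-disjoint supports, H\"older's inequality in $k$ gives $|\int\bar u\,\Psi_t|\lesssim\|D_1u\|_{L_p}\|D\Psi\|_{L_{p'}}$, with the obvious change for $p=1$.

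Your own exponents say the same thing. The defect-to-norm ratio is $r^{2\gamma-1+(d+2)/p}$. For $d=1$ this tends to $0$ exactly when $\gamma>\tfrac12-\tfrac{3}{2p}$, i.e.\ exactly when $D_1u\in L_p$. For $d\ge2$ it tends to $0$ even faster, because you also shrink the tangential support. So the borderline is structural, and pairings diverging like $\sum_k 1$ against normalized bumps would contradict $D_1u\in L_p$. The flux errors $\int\bar g_1 D_1\psi_r$ are also of the same order $r^{2\gamma+d}$ as the defect, not smaller.

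\textbf{The missing idea.} Use test functions that are \emph{not} localized at the parabolic scale in space, so they are constant on the layer $\{x_1\lesssim\sqrt t\}$ and pay no gradient penalty there. Take $\psi(t,x)=\eta(t)\phi(x_1)w(x')$ with $\phi,w$ fixed and $\phi(0)\neq0$, which is what the paper does. Then $\|\psi\|_{L_{p'}}+\|D\psi\|_{L_{p'}}\lesssim\|\eta\|_{L_{p'}}$, while your half-space identity gives
\[
\int\bar u\,\psi_t=-\phi(0)\int h_w\,\eta\,dt+O(\|\eta\|_{L_{p'}}),\qquad h_w(t)=\int h(t,x')w(x')\,dx' .
\]
A representation $\bar u_t=D_iG_i+F$ would therefore force $|\int h_w\eta\,dt|\le C\|\eta\|_{L_{p'}}$ for all $\eta\in C^\infty_0((0,\tau))$. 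By duality this means $h_w\in L_p(0,\tau)$ when $p>1$. But $h_w\sim c\,(\gamma+\tfrac12)\,t^{\gamma-1/2}$ with $c\neq0$, so this fails as soon as $\gamma\le\tfrac12-\tfrac1p$ and $\gamma\neq-\tfrac12$. That range meets $\gamma>\tfrac12-\tfrac{3}{2p}$ for every $p$, so one self-similar profile suffices. For example, $\gamma=\tfrac12-\tfrac1p$ for $p>1$ is the paper's $a=\tfrac1p-1$.

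For $p=1$, boundedness against $L_\infty$ does not give $L_1$, so argue differently. Take $\gamma\in(-1,-\tfrac12)$ and let $\eta_m$ rise linearly from $0$ to $1$ on $[\tfrac1{2m},\tfrac1m]$, followed by a fixed cutoff. The right-hand side stays bounded by dominated convergence. The left-hand side contains $2m\int_{1/(2m)}^{1/m}\int u\,\phi\,dx\,dt\sim m^{-\gamma-1/2}\to\infty$, because the boundary-layer mass $M(t)\sim t^{\gamma+1/2}$ blows up as $t\to0$.
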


\begin{remark}
A few remarks regarding Theorem~\ref{MT} are in order.
\begin{enumerate}[(i)]
    \item The proof of Theorem~\ref{MT} actually yields a slightly stronger statement.
Namely, for every $p\in[1,\infty)$, the representation
\[
\bar u_t = D_iG_i+F
\]
considered above is impossible even under the weaker requirement $G_i,F\in L_{p,\operatorname{loc}}([0,T)\times \bR^d)$, that is,
\[
G_i,F\in L_p((0,\tau)\times K), \quad i=1,\ldots,d
\]
for every $\tau \in (0,T)$ and every compact set $K \subset \bR^d$.

\item 
Unlike the case $p \in [1, \infty)$, the zero extension $\bar{u}$ of any $u \in \mathring{\cH}^1_\infty((0,T) \times \bR^d_+)$ belongs to $\cH^1_\infty((0,T) \times \bR^d)$.
This property relies on a duality argument and extends to Lipschitz domains via local flattening.
See \S \ref{Sec:p_infty} for the detailed proof and further discussion.

\end{enumerate}
\end{remark}

It has been previously pointed out that spatial zero extension need not preserve the parabolic Sobolev class $\mathring{\cH}^1_p$ in the above sense; see, for example, \cite{MR4387945}.
However, to the best of the authors' knowledge, an explicit construction or detailed justification cannot be found in the existing literature.
In this paper, we provide an explicit construction and a rigorous proof of this failure.

\bigskip

The remainder of the paper is organized as follows.
The rest of \S\ref{S1} is devoted to preparatory material, generalizations, and implications of the main result.
In particular, \S\ref{S1_2} explains the main mechanism behind the proof of Theorem \ref{MT}.
We also discuss implications of Theorem~\ref{MT} in \S\ref{S1_4}, and summarize the comparison among the relevant function spaces in \S\ref{S1_5}.
In \S\ref{S2}, we provide the detailed proof of the main theorem and treat the case $p=\infty$.

%========================================
\subsection{Function spaces}		\label{S1_1}
%========================================

Let $T\in (0, \infty]$ and $\Omega$ be a domain in $\bR^d$, where $d\ge 1$.
For $p\in [1, \infty]$, we define 
$$
W^{1,0}_p((0,T)\times \Omega)=\{u: D^\ell_x u\in L_p((0,T)\times \Omega), \, |\ell|\le 1 \}
$$
with the norm
$$
\|u\|_{W^{1,0}_p((0,T)\times \Omega)}=\|u\|_{L_p((0,T)\times \Omega)}+\|Du\|_{L_p((0,T)\times \Omega)}.
$$
By $u_t\in \bH^{-1}_p((0,T)\times \Omega)$, we mean that there exist $g=(g_1,\ldots, g_d)\in L_p((0,T)\times \Omega)^d$ and $f\in L_{p}((0,T)\times \Omega)$ such that 
$$
u_t=D_i g_i+f \quad \text{in }\, (0,T)\times \Omega
$$
in a distribution sense, and that
$$
\|u_t\|_{\bH^{-1}_{p}((0,T)\times \Omega)}=\inf\big\{\|g\|_{L_{p}((0,T)\times \Omega)}+\|f\|_{L_{p}((0,T)\times \Omega)}: u_t=D_i g_i+f\big\}
$$
is finite.

We define $\cH^1_p((0,T)\times \Omega)$ and $V_p((0,T)\times \Omega)$ as the spaces of all functions in $W^{1,0}_{p}((0,T)\times \Omega)$ equipped with the norms
$$
\|u\|_{\cH^1_{p}((0,T)\times \Omega)}=\|u_t\|_{\bH^{-1}_{p}((0,T)\times \Omega)}+\|u\|_{W^{1,0}_p((0,T)\times \Omega)}
$$
and 
$$
\|u\|_{V_p((0,T)\times \Omega)}=\operatorname*{ess\,sup}_{t\in (0, T)} \|u(t, \cdot)\|_{L_p(\Omega)}+\|u\|_{W^{1,0}_{p}((0,T)\times \Omega)},
$$
respectively.

Let $C^\infty_0([0,T]\times \Omega)$ be the set of all infinitely differentiable functions in $[0,T]\times \Omega$ with compact support in $[0,T]\times \Omega$,
where if  $T=\infty$, the interval $[0, T]$ is understood as $[0, \infty)$.
We denote by $\mathring{\cH}^1_p((0,T)\times \Omega)$ and $\mathring{V}_p((0,T)\times \Omega)$ the closures of $C^\infty_0([0,T]\times \Omega)$ in $\cH^1_p((0,T)\times \Omega)$ and $V_p((0,T)\times \Omega)$, respectively.

The corresponding mixed-norm spaces $W^{1,0}_{p,q}$, $\cH^1_{p,q}$, $V_{p,q}$, and so on, are defined analogously, replacing all $L_p$-norms in the preceding definitions by the following $L_{p,q}$-norm:
$$
\|u\|_{L_{p,q}((0,T)\times \Omega)}=\bigg(\int_0^T \bigg(\int_{\Omega}|u(t,x)|^p\,dx\bigg)^{q/p}\,dt\bigg)^{1/q}.
$$
As usual, the definition of the norm is modified in the standard way when $p=\infty$ or $q=\infty$.

\begin{remark}		\label{RMK_0}
The parabolic Sobolev spaces introduced above can be described in terms of Bochner--Sobolev spaces.
More precisely, for $p\in (1, \infty)$, the following identifications hold with equivalent norms:
\begin{equation}		\label{260506_eq1}
\cH^1_p((0,T)\times \Omega) \sim L_p(0,T;W^1_p(\Omega))\cap W^1_p(0,T; W^{-1}_{p}(\Omega))
\end{equation}
and 
$$
V_p((0,T)\times \Omega) \sim L_p(0,T;W^1_p(\Omega))\cap L_\infty(0,T; L_p(\Omega)).
$$
Here, $W^{-1}_{p}(\Omega)$ denotes the dual space of $\mathring{W}^{1}_{p'}(\Omega)$, where $\mathring{W}^{1}_{p'}(\Omega)$ is the closure of $C^\infty_0(\Omega)$ in $W^1_{p'}(\Omega)$  and $p'$ is the conjugate exponent of $p$.
For the corresponding spaces with zero lateral boundary conditions, the spatial Sobolev space $W^1_p(\Omega)$ is replaced by $\mathring W^1_p(\Omega)$ 
yielding
\[
\mathring{\cH}_p^1\left((0,T) \times \Omega\right) \sim L_p(0,T; \mathring{W}_p^1(\Omega)) \cap W_p^1 (0,T; W_p^{-1}(\Omega)).
\]

For the reader's convenience, we give a brief justification of  \eqref{260506_eq1} by verifying, in particular, the norm equivalence
$$
\|u_t\|_{\bH^{-1}_p(\cQ)} \sim \|u_t\|_{L_p(0,T; W^{-1}_p(\Omega))},
$$
where $\cQ=(0,T)\times \Omega$.
Let 
$$
u\in \cH^1_p(\cQ),
$$
and suppose that
$$
u_t=D_i g_i+f \quad \text{in }\, \cQ
$$
for some $g_i,\,f\in L_p((0,T)\times \Omega)$.
Then, it is easy to see that $u \in L_p(0,T; W_p^1(\Omega))$.
Set
\[
\langle F(t), \varphi \rangle = \int_\Omega \left(f(t,x) \varphi(x) - g_i(t,x) D_i\varphi(x) \right) \, dx \quad \text{for} \,\, \varphi \in \mathring{W}_{p'}^1(\Omega).
\]
This is well defined for a.e. $t \in [0,T]$,
and $F$ belongs to $L_p(0,T; W_p^{-1}(\Omega))$ with
\[
\|F\|_{L_p(0,T; W_p^{-1}(\Omega))} \leq \|f\|_{L_p(\cQ)}+\|g_i\|_{L_p(\cQ)}.
\]
One can also verify that $u_t = F$ in the sense of the Bochner weak derivative.
Hence, we see that $u \in L_p(0,T;W^1_p(\Omega))\cap W^1_p(0,T; W^{-1}_{p}(\Omega))$ and
$$
\|u_t\|_{L_p(0,T; W^{-1}_p(\Omega))}\lesssim \|u_t\|_{\bH^{-1}_p(\cQ)}.
$$

Conversely, suppose that 
$$
u\in L_p(0,T;W^1_p(\Omega))\cap W^1_p(0,T; W^{-1}_{p}(\Omega)).
$$
By standard properties of Bochner spaces, $u$ admits a jointly measurable representative on $\cQ$, still denoted by $u$, with $u, Du \in L_p(\cQ)$.
Consider a linear functional $\ell:X\to \bR$ defined by 
\begin{equation}		\label{260508_eq1}
\ell(-D_1\phi,\ldots, -D_d\phi, \phi)=\int_0^T \langle u_t(t), \phi(t, \cdot)\rangle \,dt,
\end{equation}
where
$$
X=\{(-D_1\phi, \ldots, -D_d\phi, \phi) \in L_{p'}(\cQ)^{d+1}: \phi\in L_{p'}(0,T;\mathring{W}^1_{p'}(\Omega))\}.
$$
Since $X$ is a subspace of $L_{p'}(\cQ)^{d+1}$ and $\ell$ is bounded with
$$
\|\ell\|_{X^*}\lesssim \|u_t\|_{L_p(0,T; W^{-1}_p(\Omega))}, 
$$
the Hahn-Banach theorem allows us to extend $\ell$ to a bounded linear functional on $L_{p'}(\cQ)^{d+1}$.
Moreover, by the duality of $L_{p'}(\cQ)^{d+1}$, there exists
$$
(G_1, \ldots, G_d, F)\in L_p(\cQ)^{d+1}
$$
such that for all $(\eta_1,\ldots,\eta_d, \zeta)\in L_{p'}(\cQ)^{d+1}$,
\begin{equation}		\label{260508_eq2}
\ell(\eta_1,\ldots, \eta_d, \zeta)=\int_{0}^T\int_{\Omega} (G_i \eta_i+F\zeta)\,dx\,dt, 
\end{equation}
and that 
$$
\|G_i\|_{L_p(\cQ)}+\|F\|_{L_p(\cQ)}\lesssim \|u_t\|_{L_p(0,T;W^{-1}_p(\Omega))}.
$$
Combining \eqref{260508_eq1} and \eqref{260508_eq2}, we have 
$$
\int_0^T \langle u_t(t), \phi(t, \cdot)\rangle \,dt=\int_{0}^T\int_{\Omega} (-G_i D_i \phi+F \phi)\,dx\,dt
$$
for all $\phi\in C^\infty_0(\cQ)$.
Consequently, 
$$
u_t=D_iG_i+F \quad\text{in }\, \cQ
$$
and
$$
\|u_t\|_{\mathbb H^{-1}_p(\cQ)}\lesssim\|u_t\|_{L_p(0,T;W^{-1}_p(\Omega))}.
$$
This proves the converse inequality.
\end{remark}

%========================================
\subsection{Idea of the proof}		\label{S1_2}
%========================================

We illustrate the mechanism behind Theorem \ref{MT} in the special case $p=d=1$.
The argument for $p>1$ requires a different implementation, but the basic idea is the same, and it extends straightforwardly to higher dimensions $d\ge 2$.

Consider a function on $(t, x) \in (0,1)\times\bR_+$ of the form
\[
u(t,x)=\partial_x v(t,x), \quad \textrm{ where }\,\,
v(t,x)=t^{-1/4}e^{-x^2/t}.
\]
Indeed,
\[
u(t,x)=t^{-3/4}U(x/\sqrt t), \quad v(t,x)=t^{-1/4}V(x/\sqrt t),
\]
where
\[
V(y)=e^{-y^2}, \quad U(y) = V'(y) = -2y e^{-y^2}.
\]
Thus, for small $t>0$, the profile is concentrated in the region $x \sim \sqrt t$ near $x=0$.
A direct computation shows that
\[
u,\; \partial_x u \in L_1((0,1) \times \bR_+),
\quad
u_t\notin L_1((0,1)\times\bR_+),
\]
and that the time derivative admits the special structure
\[
u_t=\partial_x g,
\quad
g=v_t\in L_1((0,1)\times\bR_+).
\]
The relevant point is how the spatial norms of these functions scale as $t \searrow 0$.
Indeed, one finds
\[
\|u(t, \cdot)\|_{L_1(\bR_+)} \sim t^{-1/4},
\quad
\|u_x(t, \cdot)\|_{L_1(\bR_+)}, \, \|g(t, \cdot)\|_{L_1(\bR_+)} \sim t^{-3/4},
\]
while
\[
\|u_t(t, \cdot)\|_{L_1(\bR_+)} \sim t^{-5/4}.
\]
That is, the time derivative $u_t$ exhibits a non-integrable temporal singularity, while $u, u_x$, and $g$ remain relatively mild.
On the half-line, however, this wild behavior is completely absorbed by the divergence structure $u_t=\partial_x g$.
Thus, the singularity is harmless at the distributional level, and since $u(t, 0) = 0$ we have
\[
u\in \mathring{\cH}^1_1((0,1) \times \bR_+).
\]

The obstruction appears after extending $u$ by zero across the boundary $x=0$.
Formally, we have
\[
\bar{u}_t = \partial_x(\mathbf 1_{\{x>0\}} g)-g(t,0)\delta_0 \quad \textrm{ in } \,\, (0, 1) \times \bR,
\]
where $g(t,0)= -t^{-5/4}/4$ which is not integrable near $t=0$.
Thus spatial zero extension creates a boundary supported flux defect.
More precisely, assume for contradiction that the zero extension $\bar u$ still belongs to $\cH^1_1((0,1)\times\bR)$ and suppose, for simplicity, that $\bar u_t=\partial_x G$ (i.e. $F \equiv 0$) for some $G\in L_{1}((0,1)\times\bR)$.
Then $\bar u_t$ would satisfy the global identity
\begin{equation}
    \label{int_eq_whole}
\bigg( \int_0^1 \int_{\bR_+} u \varphi_t \,dx\,dt  = \bigg)
 \int_0^1 \int_{\bR} \bar u\,\varphi_t\,dx\,dt
=
\int_0^1 \int_{\bR} G\,\varphi_x\,dx\,dt
\end{equation}
for any $\varphi\in C^\infty_0((0,1)\times\bR)$.

Although the extension is performed only with respect to the spatial variable, \eqref{int_eq_whole} forces a compatibility between the divergence structure
and the temporal behavior of $u$.
To test this compatibility, one considers functions of the form
\[
\varphi(t,x)=\eta_m(t)\phi(x),
\]
where $\phi$ is supported near the boundary $x=0$ and $\eta_m$ is a temporal cutoff concentrating on an interval $[\tfrac{1}{2m}, \tfrac{1}{m}]$.
Plugging this choice into the left-hand side of
\eqref{int_eq_whole} yields the contribution
\begin{equation}
    \label{int_eq_div}
\int_{0}^{1} u(t,x)\,\eta_m'(t)\,dt \sim m \int_{1/(2m)}^{1/m} u(t,x)\,dt.
\end{equation}
Since $u(t, \cdot)$ is localized near the support of $\phi(\cdot)$, one finds that
\begin{equation}
    \label{int_eq_order}
 \|u(t, \cdot) \phi(\cdot)\|_{L_1(\bR_+)} \sim \|u(t, \cdot)\|_{L_1(\bR_+)} \sim t^{-1/4},
\end{equation}
and then, \eqref{int_eq_div} produces a divergent sequence
\[
m\int_{1/(2m)}^{1/m} t^{-1/4}\,dt \, \to \, \infty \quad \textrm{ as } \,\, m \to \infty
\]
on the left-hand side of \eqref{int_eq_whole}.
(Note that if $\phi$ vanishes near $x=0$, the first relation in \eqref{int_eq_order} no longer holds.)

On the other hand, the assumed representation of $\bar u_t$ by a $L_1$ function $G$ would force the right-hand side of \eqref{int_eq_whole} to remain bounded as $m \to \infty$.
This contradiction shows that the divergence structure of $u_t$, which is admissible on the half-line, cannot survive the zero extension.
In this sense, the failure of the extension is not just caused by a lack of spatial regularity, but also by the interaction between the spatial boundary and the temporal singularity.

\begin{remark}		\label{RMK_1}
Although $u(t,0)\equiv0$, the $L_1$-mass of $u(t,\cdot)$ is concentrated in the shrinking boundary layer $x\sim\sqrt t$ as $t\searrow0$.
Thus the singular behavior is located at the initial-boundary corner $(t,x)=(0,0)$.
\end{remark}

%========================================
\subsection{Generalizations}			\label{S1_3}
%========================================

We describe a generalization of Theorem \ref{MT} in two directions.

\begin{enumerate}
    \item Theorem~\ref{MT} extends to the mixed-norm Sobolev spaces $\mathring{\cH}^1_{p,q}((0,T)\times \bR^d_+)$, where  $p\in [1, \infty]$ and $q\in [1, \infty)$ with $(p,q)\neq (\infty, 1)$.
This follows by adapting the proof of the theorem (see \S \ref{S2}) with the condition
$$
\frac{1}{2p}+\frac{1}{q}-\frac{3}{2}\le a < \frac{1}{2p}+\frac{1}{q}-1
$$
in place of \eqref{240813_eq1}.

\item The result extends from the half-space to arbitrary Lipschitz domains $\Omega \subset \bR^d$.
This relies on a standard boundary flattening argument and the observation that a suitable spatial localization of the counterexample from Theorem \ref{MT}, when restricted near the boundary, still serves as a counterexample (see Remark \ref{RMK_1}).
Pulling this localized function back to the original domain $\Omega$ yields a function $v \in \mathring{\cH}^1_{p,q}((0,T)\times \Omega)$ whose zero extension does not belong to $\cH^1_{p,q}((0,T)\times \bR^d)$.
Moreover, there do not exist 
$$
G_i, \, F\in L_{p,q}((0,T)\times \bR^d), \quad i=1,\ldots, d
$$
such that 
$$
\bar{v}_t=D_iG_i+F \quad \textrm{in }\, (0,T)\times \bR^d,
$$
where $\bar{v}$ denotes the spatial zero extension of $v$.

\end{enumerate}

%========================================
\subsection{Some comments}	\label{S1_4}
%========================================

In this subsection, we discuss some implications of Theorem \ref{MT} concerning the suitability of Sobolev-type spaces as solution spaces for parabolic equations.

%========================================
\subsubsection{Realization as weak solutions}		\label{S1_4_1}
%========================================

Let $p\in (1, \infty)$ and $T\in (0, \infty]$.
We first recall that 
every function in $\cH^1_p((0,T)\times \bR^d)$ can be regarded as a weak solution to a parabolic equation in divergence form.
More precisely, 
if $u\in \cH^1_p((0,T)\times \bR^d)$ with
$$
u_t=D_iG_i+F
$$
for some $G_i,\, F\in L_p((0,T)\times \bR^d)$, then we can write
$$
u_t-\Delta u=D_i (G_i-D_i u)+F
$$
to see that $u$ satisfies an equation with the heat operator and $L_p$-data.
This observation remains valid when $\bR^d$ is replaced by an arbitrary domain.

We now contrast this with the space $V_p$.
Let $u\in \mathring{\cH}^1_p((0,T)\times \bR^d_+)$ be the counterexample from Theorem \ref{MT}, and let $\bar{u}$ denote its spatial zero extension.
From the estimate \eqref{260210_eq2} with $a=1/p-1<0$, it follows that 
$$
u\in \mathring{V}_p((0,T)\times \bR^d_+).
$$
Since the zero extension preserves the class $V_p$, we have
\[
\bar u\in V_p((0,T)\times \bR^d).
\]
However, Theorem \ref{MT} shows that $\bar u$ cannot be realized as a weak solution to \emph{any} parabolic equation in divergence form. 
Thus, unlike $\cH^1_p((0,T)\times \bR^d)$, 
the space $V_p((0,T)\times \bR^d)$ contains elements that do not arise as weak solutions to divergence-form equations.
The same phenomenon occurs when extending functions from an arbitrary Lipschitz domain $\Omega$.
This sharply contrasts with the elliptic setting:
for any domain $\Omega$, the zero extension of a function in $\mathring{W}^1_p(\Omega)$ trivially belongs to $W^1_p(\bR^d)$ and remains a weak solution to a divergence-form elliptic equation on the whole space.

%========================================
\subsubsection{Sobolev framework for parabolic equations}			\label{S1_4_2}
%========================================

The $L_p$-theory of parabolic equations is concerned with solvability and a priori estimates for solutions with data in Lebesgue spaces, under some regularity assumptions on the coefficients. 
In this theory, various types of Sobolev spaces have been employed as solution spaces, depending on the form of the equation, the singularity of the lower-order terms,  the boundary conditions under consideration, and the underlying norm structure.

Here, we focus on linear parabolic equations in divergence form with lower-order terms
\begin{equation}		\label{260223_eq1}
u_t-D_i(a^{ij}D_j u+a^iu)+b^iD_iu+cu=D_i g_i+f
\end{equation}
posed in a cylindrical domain $(0,T)\times \Omega$, and discuss the corresponding Sobolev spaces for the $L_p$-theory.

For equations with {\em bounded lower-order coefficients}, as noted earlier, the spaces $\cH^1_p$ and $\mathring{\cH}^1_p$ provide a natural setting for investigating unique solvability and estimates of the form
$$
\|u\|_{\cH^1_p}\lesssim \|g_i\|_{L_p}+\|f\|_{L_p},
$$
under zero initial conditions.
We again refer the reader to \cite{MR2187159, MR2329320, MR2832162, MR2835999, MR2304157} and the references therein for results in these Sobolev spaces under suitable assumptions on the leading coefficients $a^{ij}$ and on the boundary of the underlying domain.
Conversely, as mentioned in \S \ref{S1_4_1}, every function in $\cH^1_p$  can be regarded as  a weak solution to a parabolic equation of the form \eqref{260223_eq1}.
In this sense,  the Sobolev spaces $\cH^1_p$ and $\mathring{\cH}^1_p$ serve as natural and essentially optimal solution spaces for the $L_p$-solvability of parabolic equations with bounded lower-order coefficients.
In particular, $\mathring{\cH}^1_p$ is well suited for Dirichlet problems with zero lateral boundary conditions.
Similar observations remain valid in the mixed-norm spaces $\cH^1_{p,q}$ for equations with $L_{p,q}$-data.
%See \cite{MR2764911, MR2352490}.

By contrast, for parabolic equations with {\em (sub)critical lower-order coefficients}, namely
\begin{equation}		\label{260429_eq1}
a^i\in L_{\ell_1, r_1}, \quad  b^i\in L_{\ell_2, r_2}, \quad   c\in L_{\ell_3, r_3},
\end{equation}
the spaces $\cH^1_p$ as well as $\cH^1_{p,q}$ are, in general, no longer large enough to accommodate solutions; see \cite{MR4387945}.
In that work, the authors addressed this issue by establishing solvability results in a parabolic Sobolev framework based essentially on $W^{1,0}_{p,q}$ for $p,q\in (1, \infty)$, with the estimate
$$
\|u\|_{W^{1,0}_{p,q}}\lesssim \|g_i\|_{L_{p,q}}+\|f\|_{L_{p_1,q_1}},
$$
where the pairs $(\ell_k, r_k)$ and $(p_1, q_1)$ are determined by appropriate lower summability conditions.
When $p,q\in [2, \infty)$, they also employed the space $V_{p,q}$ in order to capture additional regularity of solutions with respect to the time variable.
We remark, however, that $V_{p,q}$ is not suitable for the solvability theory in the range $p,q\in (1,2)$; see \cite{MR2433518} for a counterexample.
Moreover, as mentioned in \S \ref{S1_4_1}, $V_{p,q}$ contains functions that cannot be realized as weak solutions to any parabolic equation.

Motivated by these observations, we finally discuss a related Sobolev space for divergence-form equations with an additional half-order time derivative term in the data.
More precisely, we consider parabolic equations of the form 
 \begin{equation}		\label{260223_eq5}
u_t-D_i(a^{ij}D_j u+a^iu)+b^iD_iu+cu=D_{t}^{1/2}h+D_i g_i+f
\end{equation}
in an {\em infinite cylinder} $\bR\times \Omega$, where the half-time derivative is defined as
$$
D_t^{1/2}h(t,x)=\frac{1}{\sqrt{8\pi}} \int_{\bR} \frac{h(t+\ell, x)-h(t,x)}{|\ell|^{3/2}}\,d\ell.
$$
This class of equations was treated in \cite{MR4920684, MR0200593}, mainly in the case $a^i=b^i=0$ with $c\ge 0$ constant, while the corresponding equations with bounded lower-order terms were briefly addressed.
There, the authors employed the Sobolev space $H^{1/2, 1}_p$, equipped with the norm
$$
\|u\|_{H^{1/2, 1}_p}:=\|D_t^{1/2}u\|_{L_p}+\|u\|_{W^{1,0}_p},
$$
to prove unique solvability
in the whole domain $\Omega=\bR^d$, under suitable regularity assumptions on $a^{ij}$.
Building on this approach, and investigating anisotropic embeddings on $H^{1/2,1}_p$ (cf. \cite[Appendix]{MR4387945}),
one may further consider the equations \eqref{260223_eq5} with unbounded lower-order terms as in \eqref{260429_eq1} in the spaces $H^{1/2,1}_p$ and $H^{1/2,1}_{p,q}$.

%========================================
\subsection{Summary}	\label{S1_5}
%========================================

In summary, the preceding discussion indicates that the Sobolev spaces $\cH^1_p$, $V_p$, and $H^{1/2,1}_p$, where $p\in (1, \infty)$, play rather different roles in the solvability theory of parabolic equations.
For the reader's convenience, we summarize the main features discussed above in Table \ref{summary}.

\begin{table}[htbp] 
\centering
\renewcommand{\arraystretch}{1.25}
\begin{tabularx}{\textwidth}{
|>{\arraybackslash}p{4cm}
|>{\arraybackslash}X
|>{\arraybackslash}X
|>{\arraybackslash}X|}
\hline
\multicolumn{1}{|c|}{\textbf{Property}}
& \multicolumn{1}{c|}{$\cH^1_p$}
& \multicolumn{1}{c|}{$V_p$}
& \multicolumn{1}{c|}{$H^{1/2,1}_p$}
\\
\hline

\textbf{Zero extension} \newline (vanishing on lateral boundary)
& Fails in general
& Holds
& Holds
\\
\hline

\textbf{Realizability as weak solutions}
& Holds
& Fails in general
& Holds$^\ast$
\\
\hline

\textbf{Solvability} \newline (bounded lower-order terms)
& Optimal
& Partial ($p \ge 2$)
& Optimal$^\ast$
\\
\hline

\textbf{Solvability} \newline (unbounded lower-order terms)
& Inadequate
& Partial ($p \ge 2$)
& Suitable$^\ast$ (to be discussed elsewhere)
\\
\hline

\end{tabularx}
\caption{
Comparison of Sobolev-type spaces. The assertions for $\cH^1_p$ and $V_p$ concern equation \eqref{260223_eq1}, while the superscript $^\ast$ refers to equation \eqref{260223_eq5} in $\bR\times \Omega$.
}
\label{summary}
\end{table}

%========================================
\section{Proof of the main theorem}			\label{S2}
%========================================

In this section, we prove our main result Theorem \ref{MT}.
It suffices to consider the theorem on a finite-height cylinder, and hence, by scaling, we may assume that $(0,T)=(0,1)$.

We first treat the case $d=1$.

\bigskip
\noindent 
\textbf{Choice of the function $u \in \mathcal{H}_p^1((0, 1) \times \bR_+)$.}
Let $a$ be a constant to be chosen later, such that 
\begin{equation}		\label{240813_eq1}
\frac{3}{2p}-\frac{3}{2}< a<\frac{3}{2p}-1.
\end{equation}
Define 
$u(t,x)=v_x(t,x)$
on $(0,1)\times \bR_+$, where 
$$
v(t,x)=t^{-a} e^{-x^2/t}.
$$
We claim that   
$$
u\in \mathring{\cH}^1_{p}((0,1)\times \bR_+).
$$
To see this, note that 
$$
u(t,x)=-2t^{-a-1} x e^{-x^2/t},
$$
$$
u_x(t,x) = -2t^{-a-1}e^{-x^2/t} + 4t^{-a-2}x^2e^{-x^2/t},
$$
and 
\begin{equation}		\label{240813_eq2}
u_t(t,x)=2(a+1)t^{-a-2}xe^{-x^2/t} - 2 t^{-a-3}x^3e^{-x^2/t}=g_x(t,x), 
\end{equation}
where 
$$
g(t,x):=v_t(t,x)=-at^{-a-1}e^{-x^2/t} + t^{-a-2}x^2e^{-x^2/t}.
$$
For $t\in (0,1)$, we calculate the $L_p$-norms by applying the change of variables $y = x/\sqrt{t}$.
For $u(t, \cdot)$, we have
\begin{align*}
    \|u(t, \cdot)\|^p_{L_p(\bR_+)} &= \int_0^\infty \big|-2t^{-a-1} \sqrt{t}y e^{-y^2}\big|^p \sqrt{t} \,dy \\
    &= 2^p t^{-p(a+1/2)+1/2} \int_0^\infty y^p e^{-py^2}\,dy.
\end{align*}
This yields
\begin{equation}		\label{260210_eq2}
\|u(t, \cdot)\|_{L_p(\bR_+)} \le N_1  t^{-a-\frac{1}{2}+\frac{1}{2p}},
\end{equation}
where $N_1 = N_1(a, p)$.
Similarly, we obtain
$$
\|u_x(t, \cdot)\|_{L_p(\bR_+)}+\|g(t, \cdot)\|_{L_p(\bR_+)} \le N_2 t^{-a-1+\frac{1}{2p}},
$$
and for the time derivative $u_t$,
$$
\|u_t(t, \cdot)\|^p_{L_p(\bR_+)} = t^{-p(a+3/2)+1/2} \int_0^\infty \big|2(a+1)y - 2y^3\big|^p e^{-py^2}\,dy,
$$
which implies
$$
\|u_t(t, \cdot)\|_{L_p(\bR_+)} \ge N_3 t^{-a-\frac{3}{2}+\frac{1}{2p}},
$$
for some positive constant $N_2$ and $N_3$ depending only on $a$ and $p$.
Then by \eqref{240813_eq1}, 
\begin{equation}		\label{250123_eq3}
u,  u_x,  g\in L_{p}((0,1)\times \bR_+)
\end{equation}
and
$$
u_t\notin L_{p}((0,1)\times \bR_+).
$$
On the other hand,  since $u_t=g_x\in L_{p}((\varepsilon, 1)\times \bR_+)$ for all $\varepsilon\in (0, 1)$, we obtain
\begin{equation}		\label{240812_C2}
\int_0^1 \int_{\bR_+} u  \varphi_t \,dx\,dt = \int_0^1 \int_{\bR_+} g \varphi_x \,dx\,dt
\end{equation}
for any $\varphi\in C^\infty_0((0,1)\times \bR_+)$, which shows that
$$
u_t\in \bH^{-1}_{p}((0,1)\times \bR_+).
$$
Together with \eqref{250123_eq3}, this implies that $u\in \cH^1_p((0,1)\times \bR_+)$.

\bigskip
\noindent
\textbf{Zero lateral boundary condition: $u \in \mathring{\cH}_p^1((0, 1) \times \bR_+)$.}
Since $u(t,x)=O(x)$ as $x\to 0^+$, one can apply a standard spatial cut-off argument.
Let $\{ \xi_n \} $ be a sequence of functions in $C^\infty(\bR)$ satisfying
$$
\xi_n(x) =
\begin{cases}
    0 & \text{ for } \, x \le \frac{1}{n} , \\[4pt]
    1 & \text{ for } \, x \ge \frac{2}{n},
\end{cases}
$$
and $|\xi_n'| \le 2n$.
Defining $u^{(n)}(t,x) = u(t,x)\xi_n(x)$, we show that
\[
\|u^{(n)} - u\|_{W^{1,0}_p((0,1)\times \bR_+)} \to 0.
\]
We have $u^{(n)}_x - u_x = u_x(\xi_n - 1) + u\xi_n'$.
Since $\operatorname{supp} \xi_n' \subset [1/n, 2/n]$, we observe that $|\xi_n'(x)| \le 2n \le 4/x$.
Thus, the error term is bounded by
$$
|u(t,x)\xi_n'(x)| \le \frac{4}{x}|u(t,x)| = 8t^{-a-1}e^{-x^2/t} \in L_p((0,1)\times \bR_+),
$$
where the inclusion is due to the condition $a <  3/(2p) - 1$ in \eqref{240813_eq1}.
By the dominated convergence theorem, this implies $\|u\xi_n'\|_{L_p ( (0, 1) \times \bR_+) } \to 0$ and thus $\|u^{(n)} - u\|_{W^{1,0}_p( (0, 1) \times \bR_+)} \to 0$.

For the time derivative, we estimate the $\bH^{-1}_p((0,1)\times \bR_+)$ norm of $u^{(n)}_t - u_t = u_t(\xi_n - 1)$.
We represent this distribution as $\mathcal{G}^{(n)}_x(t,x)$, where
$$
\mathcal{G}^{(n)}(t,x) = -\int_x^\infty u_t(t,y)\big(\xi_n(y) - 1\big)\,dy.
$$
Indeed, for any test function $\varphi \in C_0^\infty((0,1)\times \bR_+)$, applying Fubini's theorem gives
\begin{align*}
-\int_0^1 \int_{\bR_+} \mathcal{G}^{(n)} \varphi_x \,dx\,dt &= \int_0^1 \int_0^\infty u_t(t,y)\big(\xi_n(y) - 1\big) \bigg(\int_0^y \varphi_x(t,x)\,dx\bigg) dy\,dt \\
&= \int_0^1 \int_{\bR_+} u_t(\xi_n - 1) \varphi \,dy\,dt,    
\end{align*}
which confirms $\mathcal{G}^{(n)}_x = u_t(\xi_n - 1)$ in the distribution sense.
Since $\xi_n(y) - 1 \equiv 0$ on $[2/n, \infty)$ by definition, $\mathcal{G}^{(n)}(t,x) \equiv 0$ for $x \ge 2/n$. 
For $x < 2/n$, substituting $u_t(t, y) = g_y(t, y)$ and integrating by parts yields
$$
\mathcal{G}^{(n)}(t,x) = g(t,x)\big(\xi_n(x) - 1\big) + \int_x^{2/n} g(t,y)\xi_n'(y)\,dy.
$$
Since $g \in L_p((0,1)\times \bR_+)$, the $L_p$-norm of the 1st term converges to zero as $n \to \infty$.
For the 2nd term, applying H\"older's inequality gives
\begin{align*}
    \bigg| \int_x^{2/n} g(t,y)\xi_n'(y)\,dy \bigg| &\le 2n \int_{1/n}^{2/n} |g(t,y)|\,dy \\
    &\le 2n \bigg(\frac{1}{n}\bigg)^{1-1/p} \bigg( \int_{1/n}^{2/n} |g(t,y)|^p\,dy \bigg)^{1/p}.
\end{align*}
Then, integrating the $p$-th power over $x \in (0, 2/n)$ yields
$$
\int_0^{2/n} \bigg| \int_x^{2/n} g(t,y)\xi_n'(y)\,dy \bigg|^p dx \le \frac{2}{n} \cdot 2^p n \int_{1/n}^{2/n} |g(t,y)|^p\,dy = 2^{p+1} \int_{1/n}^{2/n} |g(t,y)|^p\,dy.
$$
Integrating over $t \in (0,1)$, the last term tends to zero as $n \to \infty$ by the absolute continuity of the Lebesgue integral, since $g \in L_p((0,1)\times \bR_+)$.
Consequently, $\|\mathcal{G}^{(n)}\|_{L_p((0,1)\times \bR_+)} \to 0$, which implies $\|u^{(n)}_t - u_t\|_{\bH^{-1}_p((0, 1 \times \bR_+) )} \to 0$.
A standard mollification then yields a sequence in $C_0^\infty([0,1]\times \bR_+)$, concluding $u\in \mathring{\cH}^1_p((0,1)\times \bR_+)$.
 
\bigskip
\noindent 
\textbf{Zero extension is impossible for $u$.}
Now we prove
$$
\bar{u}\notin \cH^1_{p}((0,1)\times \bR)
$$
by showing that there are no functions $G$ and $F$ in $L_p((0,1)\times \bR)$ satisfying 
$$
\bar{u}_t=G_x+F \quad \text{on }\, (0, 1)\times \bR.
$$
Here, $\bar{u}$ denotes the zero extension of $u$ with respect to $x$.
Suppose, to the contrary,  that such functions $G$ and $F$ exist.
Then,  for any $\varphi\in C^\infty_0((0,1)\times \bR)$,
\begin{equation}		\label{240812_E2}
\int_0^1 \int_{\bR} \bar{u}\varphi_t\,dx\,dt=\int_0^1\int_{\bR} G \varphi_x\,dx\,dt-\int_0^1\int_{\bR} F\varphi\,dx\,dt. 
\end{equation}
We consider the following two cases: $p = 1$ and $p \in (1, \infty)$.

\bigskip

\begin{enumerate}[1.]
\item
$p=1$.
In this case, we fix $a\in (0, 1/2)$.
Let $\eta$ and $\eta_m$ for $m\in \{2,3,\ldots\}$ be piecewise linear continuous functions on $[0,1]$ defined as follows.
First, we set $\eta$ such that
\[
\eta(t)=
\begin{cases}
    1 & \text{ for }\, t\in \left[0, \frac{1}{2}  \right], \\[5pt]
    -4t+3 & \text{ for }\, t \in \left( \frac{1}{2} , \frac{3}{4}  \right), \\[5pt]
    0 & \text{ for }\, t\in \left[ \frac{3}{4}, 1 \right].
\end{cases}
\]
Then, for each $m \ge 2$, we define $\eta_m$ by
\[
\eta_m(t)=
\begin{cases}
    0 & \text{ for }\,  t\in \left[0, \frac{1}{2m} \right], \\[5pt]
    2mt-1 & \text{ for }\,  t \in \left( \frac{1}{2m}, \frac{1}{m} \right], \\[5pt]
    \eta(t) & \text{ for }\, t \in \left( \frac{1}{m}, 1 \right].
\end{cases}
\]
We also take a sequence $\{\zeta_k\}$ in  $C^\infty(\bR)$ such that
\[
\zeta_k(x) = \begin{cases}
    0 & \text{ for  }\, x\le 0, \\[5pt]
    1 & \text{ for }\,  x \ge \frac{1}{k} .
\end{cases}
\]
For a given $\phi\in C^\infty_0(\bR)$, by setting $\varphi(t,x)=\eta_m(t)\zeta_k(x)\phi(x)$ in \eqref{240812_C2}, we have 
$$
\begin{aligned}
\int_0^1 \int_{\bR_+} u (\eta_m \zeta_k \phi)_t\,dx\,dt
&=\int_0^1 \int_{\bR_+} g (\eta_m \zeta_k \phi)_x\,dx\,dt\\
&=-\int_0^1 \int_{\bR_+} g_x (\eta_m \zeta_k \phi)\,dx\,dt,
\end{aligned}
$$
where the last integral is well-defined because $\eta_m$ has compact support in $(0,1)$ and $g_x\in L_{1}((\varepsilon, 1)\times \bR_+)$ for any $\varepsilon\in (0,1)$.
Letting $k\to \infty$, we obtain
$$
\int_0^1 \int_{\bR_+} u(\eta_m \phi)_t\,dx\,dt=-\int_0^1 \int_{\bR_+} g_x(\eta_m \phi)\,dx\,dt.
$$
Together with  \eqref{240812_E2}, this  yields
\begin{equation}		\label{240812_F1}
\int_0^1 \int_{\bR_+} g_x(\eta_m \phi)\,dx\,dt=-\int_0^1 \int_{\bR} G (\eta_m \phi)_x\,dx\,dt+\int_0^1 \int_{\bR} F (\eta_m \phi)\,dx\,dt,
\end{equation}
where, as $ m \to \infty$, the right-hand side converges to a finite number.
Thus, 
\begin{equation}		\label{230805_eq3}
\lim_{m\to \infty} \int_0^1 \int_{\bR_+} g_x(\eta_m \phi)\,dx\,dt \quad \text{exists}.
\end{equation}
Observe that 
$$
\begin{aligned}
\int_0^1  g_x(\cdot, x) \eta_m\,dt
&=\int_0^1 u_t(\cdot, x) \eta_m \,dt=-\int_0^1 u(\cdot, x) \eta_m'\,dt\\
&=2\int_{1/(2m)}^{1/m} t^{-a-1}x e^{-x^2/t}\eta'_m\,dt+ 2\int_{1/2}^1 t^{-a-1}x e^{-x^2/t}\eta'\,dt.
\end{aligned}
$$
Hence, 
$$
\int_0^1 \int_{\bR_+} g_x (\eta_m \phi)\,dx\,dt=:I_m+J, 
$$
where 
$$
I_m=2\int_{1/(2m)}^{1/m} t^{-a-1} \eta'_m\int_{\bR_+} x e^{-x^2/t}\phi\,dx\,dt,
$$
$$
J=2\int_{1/2}^1 t^{-a-1} \eta' \int_{\bR_+}x e^{-x^2/t}\phi\,dx\,dt.
$$
Using the fact that $\eta_m'(t)=2m$ on $[1/(2m), 1/m]$ and applying the change of variables $x \mapsto \sqrt{t}x$, we obtain
$$
I_m=4m\int^{1/m}_{1/(2m)} t^{-a}\int_{\bR_+} x e^{-x^2} \phi(\sqrt{t}x)\,dx dt.
$$
By choosing $\phi$ to be non-increasing on $\bR_+$ (which implies $\phi(x)\ge0$ on $\bR_+$) and positive near $x=0$, for $x>0$ and $t\in [1/(2m), 1/m]$, we have
$$
\phi(\sqrt{t}x) \ge \phi(x/\sqrt{m})\ge \phi(x).
$$
Hence, 
$$
I_m\ge 4m\int_{1/(2m)}^{1/m}t^{-a} \int_{\bR_+} x e^{-x^2}\phi\,dx\,dt= 4m M  \int_{1/(2m)}^{1/m} t^{-a}\,dt,
$$
where 
$$
0 <  M =   \int_{\bR_+}  x e^{-x^2}\phi\,dx<\infty.
$$
Since $a>0$, we see that $I_m\to \infty$ as $m\to \infty$.
This contradicts \eqref{230805_eq3} because $J$ is finite.

\bigskip

\item
$p\in (1, \infty)$.
In this case, we set 
\begin{equation}		\label{250124_eq1}
a=\frac{1}{p}-1<0.
\end{equation}
Clearly, $a$ satisfies \eqref{240813_eq1}.
Let  $\eta \in L_{p'}((0, \tau))$, and let $\{\eta_m\}$ be a sequence in $C^\infty_0((0,\tau))$ such that 
$$
\eta_m\to \eta \quad \text{in }\, L_{p'}((0,\tau))
$$
and 
$$
\eta_m(t)\to \eta(t) \quad \text{for a.e. }\, t\in (0,\tau),
$$
where $p'$ denotes the H\"older conjugate of $p$, and $\tau\in (0,1]$ is a constant to be chosen later.
As seen in \eqref{240812_F1} and \eqref{230805_eq3}, for a given $\phi\in C^\infty_0(\bR)$, we have 
$$
\lim_{m\to \infty} \int_0^\tau \int_{\bR_+} g_x(\eta_m \phi)\,dx\,dt=\cK,
$$
where 
$$
\cK:=-\int_0^\tau \int_{\bR} G (\eta \phi)_x\,dx\,dt+\int_0^\tau \int_{\bR} F (\eta \phi)\,dx\,dt.
$$
By the explicit formula for $g_x$ in \eqref{240813_eq2}, we obtain
$$
\int_{\bR_+} g_x(t, \cdot)\phi\,dx
=2t^{-a-1}H(t),
$$
where 
$$
H(t)=\int_{\bR_+} \big((a+1)x-x^3\big)e^{-x^2} \phi(\sqrt{t}x)\,dx.
$$
Hence, 
$$
\lim_{m\to \infty} \int_0^\tau t^{-a-1} H \eta_m \,dt=\frac{\cK}{2}.
$$
We fix $\phi$ such that $\phi(0) = 1$.
Since $H$ is continuous on $[0,1]$ and 
$$
H(0)=\int_{\bR_+} \big((a+1)x-x^3\big)e^{-x^2}\,dx = a\int_{\bR_+} x e^{-x^2}\,dx<0,
$$
one can find constants $\nu>0$ and $\tau\in (0,1]$ such that 
$$
H(t)\le -\nu \quad \text{for all }\, t\in [0, \tau].
$$
If $\eta\ge 0$, we may choose $\{\eta_m\}$ so that $\eta_m\ge0$ for all $m$.
Since
\[
t^{-a-1}(-H(t))\eta_m(t)\ge 0 \quad \textrm{ on } \,\, (0, \tau),
\]
by using Fatou's lemma and the fact that $|\cK|\le N \|\eta\|_{L_{p'}((0, \tau))}$,  we obtain 
$$
0\le \int_0^\tau t^{-a-1}(-H)\eta\,dt\le \lim_{m\to \infty} \int_0^\tau t^{-a-1}(-H)\eta_m\,dt\le N \|\eta\|_{L_{p'}((0, \tau))}.
$$
Therefore,  for a general $\eta\in L_{p'}((0, \tau))$, 
$$
\begin{aligned}
\bigg|\int_0^\tau t^{-a-1}H\eta\,dt\bigg|&=\bigg|\int_0^\tau t^{-a-1}H(\eta_+-\eta_-)\,dt\bigg|\\
&\le \int_0^\tau t^{-a-1} (-H) \eta_+\,dt+\int_0^\tau t^{-a-1}(-H)\eta_-\,dt\\
&\le N \|\eta\|_{L_{p'}((0,\tau))},
\end{aligned}
$$
where $\eta_+$ and $\eta_-$ are the positive and negative parts of $\eta$, respectively.
This estimate, together with  duality, implies that $t^{-a-1}H\in L_{p}((0, \tau))$.
Since $|H|$ is bounded away from zero on $[0,\tau]$, we conclude $t^{-a-1}\in L_{p}((0,\tau))$, which contradicts  the condition \eqref{250124_eq1}.
\end{enumerate}
This completes the proof for the case $d=1$.

For $d\ge 2$, we take 
$$
u(t,x)=v_{x_1}(t, x_1)w(x'),
$$
where $x=(x_1, x')\in \bR_+\times \bR^{d-1}=\bR^d_+$, $w\in C^\infty_0(\bR^{d-1})$, and $v$ is 
defined as before.
Then by direct calculation, we see that $u$ provides the desired example for the case $d\ge 2$.
\qed

\subsection{The case $p=\infty$}\label{Sec:p_infty}
To see why the counterexample for $p \in (1, \infty)$ in Theorem~\ref{MT} fails in the limit $p=\infty$, we formally set the exponent \eqref{250124_eq1} to $a = -1$.
The functions $u$ and $g=v_t$ then become
$$
u(t,x) = -2x e^{-x^2/t}, \quad g(t,x) = \Big(1 + \frac{x^2}{t}\Big) e^{-x^2/t}.
$$
Unlike the finite $p$ case where $g(t,0)$ has a non-integrable singularity as $t \to 0^+$, here $g(t,0)$ is bounded.
Thus, the singularity preventing the zero extension does not occur.
In fact, we have the following:

\begin{proposition} \label{prop_inf}
Let $T \in (0, \infty]$. For any $u \in \mathring{\cH}^1_\infty((0,T)\times \bR^d_+)$, its zero extension $\bar{u}$ with respect to $x$ satisfies
$$
\bar{u} \in \cH^1_\infty((0,T)\times \bR^d).
$$
\end{proposition}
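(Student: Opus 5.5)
The plan is to argue by duality, in the same spirit as the converse part of Remark~\ref{RMK_0}. The point is that for $p=\infty$ the dual exponent is $p'=1$, and in $W^1_1$ the boundary trace is controlled by the $W^1_1$ norm. This is exactly what fails for finite $p$, where the strip integrals below are not controlled by $\|\varphi\|_{W^1_{p'}}$.

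\emph{Spatial part.} Let $u\in\mathring{\cH}^1_\infty((0,T)\times\bR^d_+)$. Since $u$ is the limit in $W^{1,0}_\infty$ of functions in $C^\infty_0([0,T]\times\bR^d_+)$, whose zero extensions are smooth, passing to the limit gives
\[
D\bar u=\mathbf 1_{\{x_1>0\}}Du\in L_\infty((0,T)\times\bR^d).
\]
Clearly also $\bar u\in L_\infty$. It remains to represent $\bar u_t$. Fix $g_i,f\in L_\infty((0,T)\times\bR^d_+)$ with $u_t=D_ig_i+f$ in $(0,T)\times\bR^d_+$.

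\emph{Key estimate.} For $\varphi\in C^\infty_0((0,T)\times\bR^d)$ I define $\ell(\varphi)=-\int\!\!\int \bar u\,\varphi_t\,dx\,dt$. I aim to prove
\begin{equation*}
|\ell(\varphi)|\le N\big(\|g\|_{L_\infty}+\|f\|_{L_\infty}\big)\big(\|\varphi\|_{L_1((0,T)\times\bR^d)}+\|D\varphi\|_{L_1((0,T)\times\bR^d)}\big).
\end{equation*}
Take cutoffs $\zeta_k(x_1)$ with $\zeta_k=0$ for $x_1\le 1/k$, $\zeta_k=1$ for $x_1\ge 2/k$, and $|\zeta_k'|\le 2k$. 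Then $\zeta_k\varphi\in C^\infty_0((0,T)\times\bR^d_+)$ is admissible, so
\[
\int\!\!\int_{\bR^d_+} u\,(\zeta_k\varphi)_t
=\int\!\!\int_{\bR^d_+}\big(g_iD_i(\zeta_k\varphi)-f\zeta_k\varphi\big).
\]
As $k\to\infty$ the left side tends to $\int\!\!\int \bar u\varphi_t$ by dominated convergence, since $u\in L_\infty$ and $\varphi_t\in L_1$. On the right, the terms $g_i\zeta_kD_i\varphi$ and $f\zeta_k\varphi$ are bounded by $\|g\|_\infty\|D\varphi\|_1+\|f\|_\infty\|\varphi\|_1$. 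The only delicate term is the boundary-layer term
\[
\int\!\!\int g_1\zeta_k'\varphi,
\qquad\text{with}\qquad
\Big|\int\!\!\int g_1\zeta_k'\varphi\Big|\le 2k\|g_1\|_\infty\int_0^T\!\!\int_{\bR^{d-1}}\int_{1/k}^{2/k}|\varphi(t,x_1,x')|\,dx_1\,dx'\,dt .
\]
By the one-dimensional inequality $\sup_{x_1}|\varphi(t,x_1,x')|\le\int_{\bR}|D_1\varphi(t,s,x')|\,ds$, valid for compactly supported $\varphi$, this is at most $2\|g_1\|_\infty\|D_1\varphi\|_{L_1}$, uniformly in $k$. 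This uniform strip bound is the main step, and it is precisely where $p=\infty$ is used. Taking $\limsup_k$ gives the key estimate.

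\emph{Hahn--Banach and representation.} Consider the subspace
\[
X=\{(-D_1\varphi,\dots,-D_d\varphi,\varphi):\varphi\in C^\infty_0((0,T)\times\bR^d)\}\subset L_1((0,T)\times\bR^d)^{d+1}.
\]
The functional $\ell$ is well defined on $X$, because the vector determines $\varphi$. By the key estimate it is bounded in the $L_1^{d+1}$ norm. Extend it by Hahn--Banach to all of $L_1^{d+1}$, and use $(L_1)^*=L_\infty$, which holds also for $T=\infty$ since the underlying measure is $\sigma$-finite. This gives $G_i,F\in L_\infty((0,T)\times\bR^d)$ with
\[
-\int\!\!\int\bar u\varphi_t=\int\!\!\int\big(-G_iD_i\varphi+F\varphi\big)
\qquad\text{for all }\varphi,
\]
that is, $\bar u_t=D_iG_i+F$. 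Combined with the spatial part, this gives $\bar u\in\cH^1_\infty((0,T)\times\bR^d)$, with norm controlled by $\|u\|_{\cH^1_\infty}$.
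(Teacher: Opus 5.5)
Your proposal is correct and follows essentially the same route as the paper. You test the half-space equation with a boundary cutoff times $\varphi$, bound the boundary-layer term uniformly via $\sup_{x_1}|\varphi|\le\int_{\bR}|D_1\varphi|\,ds$, and conclude with Hahn--Banach and $(L_1)^*=L_\infty$. The differences are cosmetic: your smooth cutoff vanishing for $x_1\le 1/k$ makes $\zeta_k\varphi$ directly admissible, whereas the paper's piecewise-linear $\zeta_\varepsilon$ touches $\{x_1=0\}$ and tacitly needs an approximation step. You also justify $D\bar u=\mathbf 1_{\{x_1>0\}}Du$ explicitly, which the paper simply calls clear.
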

\begin{proof}
Since $u \in \mathring{\cH}^1_\infty((0,T)\times \bR^d_+)$, we have $u, Du \in L_\infty((0,T)\times \bR^d_+)$, and there exist $g_1, \ldots, g_d, f \in L_\infty((0,T)\times \bR^d_+)$ such that
\begin{equation}		\label{260616_eq1}
u_t = D_i g_i + f \quad \text{in }\, (0,T)\times \bR^d_+.
\end{equation}
For the zero extension $\bar{u}$, it is clear that $\bar{u}, D\bar{u} \in L_\infty((0,T)\times \bR^d)$.
To show that $\bar{u}_t$ can be represented in divergence form, let $\varphi \in C^\infty_0((0,T)\times \bR^d)$ be an arbitrary test function.
We consider the functional
$$
I = I(\varphi) := -\int_0^T \int_{\bR^d} \bar{u} \varphi_t \,dx\,dt = -\int_0^T \int_{\bR^d_+} u \varphi_t \,dx\,dt.
$$

For $\varepsilon > 0$, we introduce a cut-off function $\zeta_\varepsilon(x_1)$ defined by
\[
\zeta_\varepsilon(x_1) = 
\begin{cases}
    0 & \text{for } x_1 \le 0, \\[4pt]
    x_1/\varepsilon & \text{for } 0 < x_1 < \varepsilon, \\[4pt]
    1 & \text{for } x_1 \ge \varepsilon.
\end{cases}
\]
Then by testing \eqref{260616_eq1} with $\zeta_{\varepsilon}\varphi$, we have
$$
-\int_0^T \int_{\bR^d_+} u (\zeta_\varepsilon \varphi)_t \,dx\,dt = \int_0^T \int_{\bR^d_+} - g_i D_i(\zeta_\varepsilon \varphi) + f \zeta_\varepsilon \varphi  \,dx\,dt.
$$
Taking the limit as $\varepsilon \to 0$, the left-hand side converges to $I$. 
On the right-hand side, since $D_1 \zeta_\varepsilon = \varepsilon^{-1} I_{(0, \varepsilon)}(x_1)$, we obtain
\begin{equation}
    \label{eq:p_inf}
I = \int_0^T \int_{\bR^d_+} - g_i D_i \varphi + f \varphi  \,dx\,dt + \lim_{\varepsilon \to 0^+} \frac{1}{\varepsilon} \int_0^T \int_0^\varepsilon \int_{\bR^{d-1}} g_1 \varphi \,dx'\,dx_1\,dt.
\end{equation}
Note that
$$
|\varphi(t, x_1, x')| = \bigg| \int_{-\infty}^{x_1} D_1 \varphi(t, s, x') \,ds \bigg| \le \int_{-\infty}^\infty |D_1 \varphi(t, s, x')| \,ds.
$$
Using this, we estimate the last term in \eqref{eq:p_inf} as follows:
$$
\begin{aligned}
&\bigg| \frac{1}{\varepsilon} \int_0^T \int_0^\varepsilon \int_{\bR^{d-1}} g_1 \varphi \,dx'\,dx_1\,dt \bigg| \\
&\le \|g_1\|_{L_\infty((0,T)\times \bR^d_+)} \frac{1}{\varepsilon} \int_0^\varepsilon \bigg( \int_0^T \int_{\bR^{d-1}} \int_{-\infty}^\infty |D_1 \varphi(t,s, x')| \,ds \,dx'\,dt \bigg) dx_1 \\
&= \|g_1\|_{L_\infty((0,T)\times \bR^d_+)} \int_0^T \int_{\bR^d} |D_1 \varphi(t, x)| \,dx\,dt.
\end{aligned}
$$
Therefore, 
$$
|I| \le N \int_0^T \int_{\bR^d} \bigg( |\varphi| + \sum_{i=1}^d |D_i \varphi| \bigg) \,dx\,dt,
$$
where
\[
N=N\left(\|g_i\|_{L_\infty((0,T)\times \bR^d_+)}, \|f\|_{L_\infty((0,T)\times \bR^d_+)}\right) > 0.
\]
This inequality implies that the linear mapping
\[
(\varphi, D_1 \varphi, \dots, D_d \varphi) \mapsto I
\]
is bounded with respect to the $L_1$-norm.
By the Hahn-Banach theorem and the duality relation $(L_1)^* = L_\infty$, there exist functions $\bar{F}, \bar{G}_1, \dots, \bar{G}_d \in L_\infty((0,T)\times \bR^d)$ such that
$$
I = \int_0^T \int_{\bR^d} -\bar{G}_i D_i \varphi +  \bar{F} \varphi \,dx\,dt.
$$
This concludes that
\[
\bar{u}_t = D_i \bar{G}_i + \bar{F} \quad \textrm{ in }\,\, (0,T)\times \bR^d.
\]
Hence, $\bar{u} \in \cH^1_\infty((0,T)\times \bR^d)$.
\end{proof}

\bibliographystyle{amsplain}

\def\cprime{$'$}

\end{document}